\newcommand{\bB}{\mathcal{B}}
\DeclareMathOperator{\PSL}{PSL}
\DeclareMathOperator{\Sym}{Sym}
\DeclareMathOperator{\Aut}{Aut}
\DeclareMathOperator{\Der}{Der}
\DeclareMathOperator{\fix}{fix}
\newcommand{\nv}{^{-1}}
\renewcommand\epsilon\varepsilon
\renewcommand\emptyset\varnothing
\newtheorem{thm}{Theorem}[section]
\newtheorem{lem}[thm]{Lemma}
\newtheorem{prop}[thm]{Proposition}
\newtheorem{cor}[thm]{Corollary}
\theoremstyle{definition}
\newtheorem{defn}[thm]{Definition}
\newtheorem{rmk}[thm]{Remark}
\date{\today}
\begin{document}

\title{Uniformly vertex-transitive graphs}
\author{Simon Schmidt, Chase Vogeli, Moritz Weber}

\address{Saarland University, Fachbereich Mathematik, Postfach 151150,
66041 Saarbr\"ucken, Germany}
\email{simon.schmidt@math.uni-sb.de}
\email{weber@math.uni-sb.de}

\address{Department of Mathematics, Massachusetts Institute of Technology, Cambridge, MA 02139, USA}
\email{cpvogeli@mit.edu}

\subjclass[2010]{20B25 (Primary); 05C99 (Secondary)}
\keywords{finite graphs, vertex-transitive graphs, graph automorphisms}
\date{\today}

\begin{abstract}
We introduce uniformly vertex-transitive graphs as vertex-transitive graphs satisfying a stronger condition on their automorphism groups, motivated by a problem which arises from a Sinkhorn-type algorithm. We use the derangement graph $D(\Gamma)$ of a given graph $\Gamma$ to show that the uniform vertex-transitivity of $\Gamma$ is equivalent to the existence of cliques of sufficient size in $D(\Gamma)$. Using this method, we find examples of graphs that are vertex-transitive but not uniformly vertex-transitive, settling a previously open question. Furthermore, we develop sufficient criteria for uniform vertex-transitivity in the situation of a graph with an imprimitive automorphism group. We classify the non-Cayley uniformly vertex-transitive graphs on less than 30 vertices outside of two complementary pairs of graphs.
\end{abstract}

\maketitle
%\tableofcontents

\section{Introduction and main results}\label{sec:int}

The motivation for our article arose from \cite{NSW19}, where a Sinkhorn-type algorithm was presented for determining whether or not a given finite graph has quantum symmetries. This algorithm can only be applied to graphs which are vertex-transitive in a stronger sense we want to specify in the present article. We introduce it independently of the framework of \cite{NSW19}. 

Specifically, a graph $\Gamma$ on $n$ vertices is uniformly vertex-transitive if there exist $n$ graph automorphisms $\sigma_1,\ldots,\sigma_n\in\Aut(\Gamma)$ such that, when viewed as matrices, we have $\sum\sigma_i=J_n$, the $n\times n$ matrix with all entries 1. In \cref{prop:implications} we show that we have the following inclusions of classes of graphs:
\[ \small\{\text{Cayley graphs}\} \subset \{\text{uniformly vertex-transitive graphs}\} \subset \{\text{vertex-transitive graphs}\} \] 
We show that both of these inclusions are strict, with the Petersen graph and its line graph as testimonials. For the former, we explicitly prove that the Petersen graph is uniformly vertex-transitive in \cref{prop:petersen}, by constructing a set of automorphisms in the above sense, while it is well-known that the Petersen graph is non-Cayley (see \cite{MP94} for instance). For the latter, it is well-known that the Petersen graph is edge-transitive (see \cite{HS93}, for instance) which implies that its line graph is vertex-transitive, but showing that its line graph is not uniformly vertex-transitive is more subtle. 

We do so by developing some criteria for a graph to be uniformly vertex-transitive. In \cref{sec:comp}, we see that the derangement graph $D(\Gamma)$ of a graph $\Gamma$ encodes information about the Schur product of automorphisms of $\Gamma$. In the main theorem of this article (\cref{thm:clique_nhd}), we prove that $\Gamma$ is uniformly vertex-transitive if $D(\Gamma)$ has a maximal clique number.

Additionally, we develop sufficient criteria for a graph to be uniformly-vertex transitive in the case of its automorphism group being imprimitive in \cref{sec:imp}. In particular, we show in \cref{thm:factor} that the existence of an $\Aut(\Gamma)$-invariant partition of the vertex set satisfying some extra conditions implies $\Gamma$ is uniform vertex-transitive.

\begin{table}[ht]\begin{tabular}{|c|cc|} \hline
Vertices & UVT & non-UVT \\ \hline\hline
10 & 2 & \\
15 & & 4 \\
16 & 8 & \\
18 & 4 & \\
20 & 70 & 12 \\
24 & 112 & \\
26 & 132 & \\
28 & $\geq 24$ & $\geq 38$ \\
30 & $\geq 324$ & $\geq 730$ \\\hline
\end{tabular}
\\[3mm]\caption{The counts of uniformly vertex-transitive (UVT) and non-uniformly vertex-transitive (non-UVT) graphs on numbers of vertices for which there exist vertex-transitive vertex-transitive non-Cayley graphs.}\label{tab:simple_counts}\end{table}

We additionally present some experimental results, based on a search for uniformly vertex-transitive graphs in a database \cite{R97} of vertex-transitive non-Cayley graphs on up to 30 vertices. We present the counts of graphs in the database that were identified to be uniformly vertex-transitive or non-uniformly vertex-transitive in \cref{tab:simple_counts}. In particular, we fully classify the non-Cayley uniformly vertex-transitive graphs on less than 30 vertices outside of two complementary pairs of graphs on 28 vertices with large automorphism groups. More detailed information about experimental results can be found in \cref{sec:exp}.

In \cref{sec:further}, we introduce the property of $k$-uniform vertex-transitivity, one possible generalization of uniform vertex-transitivity, and record some basic facts about this property.

\section{The notion of uniform vertex-transitivity}\label{sec:uvt}

In this section we definine the notion of uniformly vertex-transitive graphs. Before doing so, let us recall the notions of Cayley graphs and vertex-transitive graphs. Throughout this article, we restrict to finite simple graphs. For a graph $\Gamma$, we denote its vertex set and edge set as $V(\Gamma)$ and $E(\Gamma)$, respectively.

\begin{defn}
Let $G$ be a finite group and $S\subset G$ be a subset of $G$. The (uncolored, undirected) \emph{Cayley graph} $C(G,S)$ is a graph with vertex set $G$ in which two group elements $a,b\in G$ are adjacent if there exists $s\in S$ such that either $a=sb$ or $b=sa$. We also say that a finite graph $\Gamma$ is \emph{Cayley} if $\Gamma$ is isomorphic to $C(G,S)$ for a group $G$ and generating set $S$.
\end{defn}

Note that it is also common to define a Cayley graph as directed to indicate which equality holds and colored to indicate the element $s\in S$. For our purposes however, we will work with this uncolored, undirected version of the definition. We have the following well-known characterization of Cayley graphs by Sabidussi. We give a proof for the convenience of the reader. Recall that a transitive group action is \emph{regular} if all of the point stabilizers are trivial.

\begin{prop}[{\cite[Lemma 4]{S58}}]
\label{prop:cay-reg}
A graph $\Gamma$ is a Cayley graph if and only if a subgroup of $\Aut(\Gamma)$ acts regularly on $V(\Gamma)$.
\end{prop}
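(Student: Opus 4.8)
The plan is to prove the two implications directly from the definitions, the key device in each direction being the bijection between a regularly acting group and the vertex set obtained by fixing a base point.

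\emph{($\Rightarrow$)} For the forward implication, suppose $\Gamma \cong C(G,S)$. Since graph-theoretic properties transport along isomorphisms, I would exhibit a subgroup of $\Aut(C(G,S))$ acting regularly on the vertex set $G$ and then pull it back along the isomorphism. Unwinding the definition, $a,b \in G$ are adjacent in $C(G,S)$ exactly when $ab^{-1} \in S \cup S^{-1}$, a condition depending only on the product $ab^{-1}$. Consequently each right translation $\rho_g\colon a \mapsto ag$ is a graph automorphism, because $(\rho_g a)(\rho_g b)^{-1} = ag\,(bg)^{-1} = ab^{-1}$. The set $R = \{\rho_g : g\in G\}$ is closed under composition and inversion ($\rho_g\rho_h = \rho_{hg}$, $\rho_g^{-1}=\rho_{g^{-1}}$), hence a subgroup of $\Aut(C(G,S))$; it acts transitively since $\rho_{a^{-1}b}$ sends $a$ to $b$, and with trivial point stabilizers since $\rho_g(a)=a$ forces $g=e$. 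Thus $R$ acts regularly, and transporting it along $\Gamma\cong C(G,S)$ finishes this direction.

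\emph{($\Leftarrow$)} For the converse, suppose $H \le \Aut(\Gamma)$ acts regularly on $V(\Gamma)$. Fix a vertex $v_0$. By regularity the map $\psi\colon H \to V(\Gamma)$, $\psi(h) = h^{-1}(v_0)$, is a bijection; in particular $|H| = |V(\Gamma)|$. Set $S = \{\, s\in H : \text{$s(v_0)$ is adjacent to $v_0$} \,\}$. Because every element of $H$ preserves the adjacency relation, which is symmetric and loop-free, we get $s\in S \iff s^{-1}\in S$ (apply $s^{-1}$) and $e\notin S$. I then claim $\psi$ is a graph isomorphism $C(H,S) \to \Gamma$: for $g,h\in H$, the vertices $\psi(g)=g^{-1}(v_0)$ and $\psi(h)=h^{-1}(v_0)$ are adjacent in $\Gamma$ iff, applying the automorphism $g$ (which sends $g^{-1}(v_0)\mapsto v_0$ and $h^{-1}(v_0)\mapsto gh^{-1}(v_0)$), the vertices $v_0$ and $gh^{-1}(v_0)$ are adjacent, i.e.\ iff $gh^{-1}\in S$; since $S=S^{-1}$, this is equivalent to ``$gh^{-1}\in S$ or $hg^{-1}\in S$'', which is exactly adjacency of $g$ and $h$ in $C(H,S)$. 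Hence $\Gamma\cong C(H,S)$ is a Cayley graph.

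There is no essential difficulty here; the only thing to watch will be the left/right conventions — one must use right translations in $(\Rightarrow)$ and precompose with inversion on $H$ in $(\Leftarrow)$ so that the resulting adjacency relation is precisely the one in the paper's definition of $C(G,S)$ and the connection set comes out symmetric — together with the routine bookkeeping that $\Gamma$ is simple (forcing $e\notin S$) and that $|H|=|V(\Gamma)|$. It is worth remarking that the connection set $S$ constructed in $(\Leftarrow)$ generates $H$ if and only if $\Gamma$ is connected, so for connected $\Gamma$ one recovers $\Gamma\cong C(H,S)$ with $S$ a bona fide generating set.
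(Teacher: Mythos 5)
Your proof is correct and follows essentially the same route as the paper: translations of the group give the regular subgroup in one direction, and in the other the regular action is used to label vertices by group elements and read off a symmetric connection set from the neighbours of a base point. You are in fact more careful than the paper's own write-up about the left/right convention (with the paper's adjacency rule $a=sb$ or $b=sa$, it is right translation, not left, that manifestly preserves adjacency), but this is a bookkeeping point rather than a different argument.
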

\begin{proof}
Suppose that $\Gamma$ is Cayley, so $\Gamma$ is isomorphic to $C(G,S)$ for some group $G$ and subset $S\subset G$. Observe that for adjacent vertices $a,b\in G$ and $s\in S$ such that $b=sa$, for any $g\in G$, we have that $gb=gsa$ as well. Hence each $g\in G$ induces an automorphism of $\Gamma$ via left multiplication. Thus, $G$ is a subgroup of $\Aut(\Gamma)$. Because the vertices of $\Gamma$ are identified with elements of $G$, this action is equivalent to the left regular action of $G$ on itself. Thus, $G$ is a regular subgroup of $\Aut(\Gamma)$.

In the opposite direction, suppose that $\Aut(\Gamma)$ has a subgroup $G$ with a regular action on $V(\Gamma)$. Thus, we seek to construct a graph isomorphism $\varphi:C(G,S)\to\Gamma$ for some subset $S\subset G$. Pick an arbitrary vertex $v_0\in V(\Gamma)$, which we will `label' as the identity in $G$. For all vertices $v \in V(\Gamma)$, there exists a unique $g_v\in G$ which sends $v_0\mapsto v$ by the regularity of the action of $G$. The regularity of $G$ also gives that $|G|=|V(\Gamma)|$. Let $S=\{g_v : v \in V(\Gamma), v \sim v_0\}$, Defining $\varphi$ as $\varphi(g_v) = g_v(v)$ indeed gives a graph isomorphism $C(G,S)\simeq\Gamma$, so $\Gamma$ is Cayley.
\end{proof}

It is clear from the above proposition that the automorphism group of a Cayley graph is transitive. Indeed, every Cayley graph also has the weaker property of being vertex-transitive, which we define below. 

\begin{defn}
A graph $\Gamma$ is \emph{vertex-transitive} if for any vertices $u,v\in V(\Gamma)$, there exists an automorphism $\sigma\in\Aut(\Gamma)$ such that $\sigma(u)=v$. In other words, $\Aut(\Gamma)$ acts transitively on the vertices of $\Gamma$.
\end{defn}

Having seen the definition of a vertex-transitive graph, we are ready to define what it means for a graph to be uniformly vertex-transitive.

\begin{defn}
Let $\Gamma$ be a graph on $n$ vertices. The graph $\Gamma$ is \emph{uniformly vertex-transitive} if there exists a size $n$ subset $\{\sigma_1,\ldots,\sigma_n\} \subset \Aut(\Gamma)$ such that, when viewed as matrices, $\sum \sigma_i = J_n$, where $J_n$ is the $n\times n$ matrix with all entries 1. Such a subset $\{\sigma_1,\ldots,\sigma_n\}$ is called a \emph{maximal Schur set}\footnote{Maximal Schur sets are also known elsewhere in the literature as \emph{sharply transitive sets}.}.
\end{defn}

We motivate the naming of this property with the following observation. If $\Gamma$ is vertex-transitive and $j\in V(\Gamma)$, we find $n$ automorphisms $\{ \sigma_1^j,\ldots, \sigma_n^j \}$ for which $\sum_i \sigma_i^j$ has all entries 1 in row $j$. Then, $\Gamma$ is uniformly vertex-transitive if it is possible to make a \emph{uniform} choice of $\{\sigma_1,\ldots,\sigma_n\} \subset \Aut(\Gamma)$ such that $\sum \sigma_i$ has all entries 1 in \emph{each} row. A maximal Schur set is named as such to reflect the fact that the matrix $J_n$ is the identity under the Schur (i.e. entry-wise) product of $n\times n$ matrices. We make the relationship between these properties of such sets of automorphisms rigorous with the following result.

\begin{lem} \label{lem:eq-conditions} 
Let $\Gamma$ be a finite graph on $n$ vertices and let $S = \{\sigma_1,\ldots,\sigma_n\}\subset\Aut(\Gamma)$. The following are equivalent:
\begin{enumerate}[(a)]
\item $\sum \sigma_i = J_n$.
\item For all $u,v\in V(\Gamma)$, there exists an $i$ for which $\sigma_i(u)=v$.
\item For all $u,v\in V(\Gamma)$, there exists a unique $i$ for which $\sigma_i(u)=v$.
\end{enumerate}
\end{lem}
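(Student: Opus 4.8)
The plan is to prove the cycle of implications $(a)\Rightarrow(b)\Rightarrow(c)\Rightarrow(a)$, or perhaps more naturally $(c)\Rightarrow(b)$ trivially, $(b)\Rightarrow(a)$ via a counting argument, and $(a)\Rightarrow(c)$ by reading off matrix entries. Let me sketch each piece. First, note that each permutation matrix $\sigma_i$ has exactly one $1$ in each row and each column. The statement $(a)$, $\sum_i \sigma_i = J_n$, says that when we add up these $n$ permutation matrices entrywise, every entry of the resulting $n\times n$ matrix equals $1$. Since all entries of each $\sigma_i$ are nonnegative integers, the entry in position $(u,v)$ of $\sum_i\sigma_i$ being $1$ means that exactly one of the matrices $\sigma_i$ has a $1$ in position $(u,v)$, i.e.\ exactly one index $i$ satisfies $\sigma_i(u)=v$ (using the convention that $\sigma_i(u)=v$ corresponds to a $1$ in row $u$, column $v$). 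This immediately gives $(a)\Leftrightarrow(c)$, and $(c)\Rightarrow(b)$ is trivial.

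The only implication requiring a genuine argument is $(b)\Rightarrow(a)$ (equivalently $(b)\Rightarrow(c)$): we must upgrade ``there exists an $i$'' to ``there exists a unique $i$''. First I would fix a vertex $u$ and consider the multiset $\{\sigma_1(u),\ldots,\sigma_n(u)\}$ of images. Condition $(b)$ says that as $v$ ranges over the $n$ vertices, each $v$ appears at least once among these $n$ images; since there are exactly $n$ images and $n$ vertices, by pigeonhole each $v$ appears exactly once. Hence for this fixed $u$, the map $i\mapsto\sigma_i(u)$ is a bijection from $\{1,\ldots,n\}$ to $V(\Gamma)$, so for every $v$ there is a unique $i$ with $\sigma_i(u)=v$. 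Since $u$ was arbitrary, this establishes $(c)$, and combined with the first paragraph we get $(a)$.

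There is no serious obstacle here; the proof is a short pigeonhole-plus-matrix-bookkeeping argument. The one point that deserves care is simply being explicit about the correspondence between the permutation $\sigma_i$ acting on vertices and its permutation matrix, so that ``$\sigma_i(u)=v$'' unambiguously corresponds to the $(u,v)$ entry being $1$ (as opposed to the $(v,u)$ entry, depending on the convention for left versus right action); once the convention is pinned down, everything is immediate. I would also remark that this lemma justifies the terminology ``sharply transitive set'' mentioned in the footnote, since $(c)$ is precisely the sharp-transitivity condition.
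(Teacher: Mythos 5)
Your proposal is correct and follows essentially the same route as the paper: $(a)\Rightarrow(c)$ by reading off that each entry of $J_n$ is $1$ and the $\sigma_i$ are $0$--$1$ matrices, $(c)\Rightarrow(b)$ trivially, and $(b)\Rightarrow(a)$ by the counting/pigeonhole argument comparing the $n$ images $\sigma_i(u)$ with the $n$ vertices. Your added care about the row-versus-column convention is a reasonable refinement but does not change the argument.
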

\begin{proof}
Suppose that $\sum \sigma_i = J_n$, and fix $u,v\in V(\Gamma)$. The entry of $J_n$ corresponding to $(u,v)$ will clearly be 1, so there is a term $\sigma_i$ in the sum $\sum\sigma_i=J_n$ for which the $(u,v)$ entry is 1, thus $\sigma_i(u)=v$. Since all the $\sigma_i$ are permutation matrices with all entries 0 or 1, this $\sigma_i$ is unique. This establishes (a)$\implies$(c).

The implication (c)$\implies$(b) is clear.

To show (b)$\implies$(a), fix a vertex $u\in V(\Gamma)$. Since for every $v\in V(\Gamma)$, there is an $i$ for which $\sigma_i(u)=v$ and there are as many vertices in $V(\Gamma)$ as there are elements in $S$, the row corresponding to $u$ in the sum $\sum\sigma_i$ must have all 1s. Therefore $\sum\sigma_i=J_n$.
\end{proof}

As mentioned above, every Cayley graph is vertex-transitive. We observe that the property of being uniformly vertex-transitive sits between these two other graph properties. 

\begin{prop} \label{prop:implications}
Let $\Gamma$ be a finite graph on $n$ vertices. \begin{enumerate}[(a)]
\item If $\Gamma$ is a Cayley graph, then $\Gamma$ is uniformly vertex-transitive.
\item If $\Gamma$ is a uniformly vertex-transitive graph, then $\Gamma$ is vertex-transitive.
\end{enumerate}
\end{prop}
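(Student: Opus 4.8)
The plan is to prove the two implications separately, in each case by exhibiting an explicit maximal Schur set or by extracting the desired transitivity directly from \cref{lem:eq-conditions}.

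For part (a), suppose $\Gamma$ is Cayley. By \cref{prop:cay-reg}, $\Aut(\Gamma)$ contains a subgroup $G$ acting regularly on $V(\Gamma)$; in particular $|G| = n$. Enumerate the elements of $G$ as $\sigma_1,\ldots,\sigma_n$ and consider the set $S = \{\sigma_1,\ldots,\sigma_n\} \subseteq \Aut(\Gamma)$. I claim $S$ is a maximal Schur set. By \cref{lem:eq-conditions}, it suffices to verify condition (b): given $u,v \in V(\Gamma)$, regularity of the $G$-action provides some $\sigma_i \in G$ with $\sigma_i(u) = v$. Hence $\sum \sigma_i = J_n$ and $\Gamma$ is uniformly vertex-transitive. (One should note $|S| = |G| = n$ as required by the definition, which again uses regularity.)

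For part (b), suppose $\Gamma$ is uniformly vertex-transitive, so there is a maximal Schur set $\{\sigma_1,\ldots,\sigma_n\} \subseteq \Aut(\Gamma)$. Given any $u,v \in V(\Gamma)$, condition (b) of \cref{lem:eq-conditions} (which holds by the equivalence with condition (a)) supplies an automorphism $\sigma_i$ with $\sigma_i(u) = v$. Thus $\Aut(\Gamma)$ acts transitively on $V(\Gamma)$, i.e. $\Gamma$ is vertex-transitive.

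Neither implication presents a genuine obstacle: once \cref{prop:cay-reg} and \cref{lem:eq-conditions} are in hand, both parts are essentially immediate. The only point requiring a modicum of care is confirming in part (a) that the chosen set has exactly $n$ elements — this is where the regularity of $G$ (as opposed to mere transitivity, which would only give $|G| \geq n$) is used, and it is worth stating explicitly so the reader sees why the Cayley hypothesis, rather than just vertex-transitivity, is needed.
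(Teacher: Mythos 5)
Your proof is correct and follows essentially the same route as the paper: part (a) invokes \cref{prop:cay-reg} to obtain a regular subgroup $G$ of order $n$ and then applies \cref{lem:eq-conditions} to conclude $G$ is a maximal Schur set (the paper verifies condition (c) directly from regularity where you verify condition (b), but the lemma makes these interchangeable), and part (b) is identical. Your explicit remark that regularity, not mere transitivity, is what guarantees $|G|=n$ is a nice touch but does not change the argument.
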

\begin{proof}
Let $\Gamma$ be a Cayley graph. From \cref{prop:cay-reg}, we have that $\Aut(\Gamma)$ contains a regular subgroup which we will denote by $G$. By the definition of a regular action, $G$ satisfies (c) of \cref{lem:eq-conditions}, so $G$ is a maximal Schur set. Thus $\Gamma$ is uniformly vertex-transitive, proving (a).

Suppose $\Gamma$ is uniformly vertex-transitive, so it follows there exists a maximal Schur set $S\subset \Aut(\Gamma)$. Thus by \cref{lem:eq-conditions}, $S$ also satisfies property (b) of the lemma, so $\Gamma$ is vertex-transitive. This proves part (b).
\end{proof}

Hence we have the following chain of implications of graph properties:
\[ \text{Cayley} \Longrightarrow \text{uniformly vertex-transitive} \Longrightarrow \text{vertex-transitive} \]
It is a natural question to ask whether the converse of any of these implications holds. In \cite{NSW19}, the authors were particularly interested in the latter implication. Comparing Cayley graphs and vertex-transitive graphs only, it is well known that the Petersen graph is the smallest vertex-transitive graph which is not Cayley. Such graphs have been extensively studied (see \cite{MP94}, for instance). In fact, the Petersen graph is even uniformly vertex-transitive, which shows that the first of the above implications is not an equivalence.

\begin{prop} \label{prop:petersen}
The Petersen graph is a uniformly vertex-transitive graph that is not a Cayley graph.
\end{prop}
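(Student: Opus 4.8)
The plan is to establish the two assertions separately. That the Petersen graph $\Gamma$ is not Cayley is classical (see \cite{MP94}); I would either cite this or argue via \cref{prop:cay-reg}: a Cayley structure would require a regular subgroup $G\le\Aut(\Gamma)\cong S_5$, necessarily of order $10$. Realizing $V(\Gamma)$ as the $2$-element subsets of $[5]=\{1,\dots,5\}$ with disjointness as adjacency, the $G$-orbits on $[5]$ have size at most $5<10$, so every point stabilizer $G_x$ for $x\in[5]$ is nontrivial; it contains an involution (it cannot have order $5$, since a $5$-cycle fixes no point of $[5]$), and an involution of $S_5$ fixing a point of $[5]$ is a transposition or a product of two transpositions, hence fixes some $2$-subset --- a vertex of $\Gamma$ --- contradicting regularity of $G$ on $V(\Gamma)$.

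The substance of the proof is producing a maximal Schur set. Keeping the model $\Aut(\Gamma)=S_5$ acting on the $2$-subsets of $[5]$, I would put $\sigma=(1\,2\,3\,4\,5)$ and $\tau=(2\,3\,5\,4)$ and claim that
\[ T=\{\sigma^i:0\le i\le 4\}\ \cup\ \{\tau\sigma^i:0\le i\le 4\} \]
is a maximal Schur set. It has $10$ elements, being the union of the two cosets $\langle\sigma\rangle$ and $\tau\langle\sigma\rangle$, which are distinct because $\tau$ is an odd permutation while $\langle\sigma\rangle$ consists of even permutations. By \cref{lem:eq-conditions} it suffices to show that for every vertex $u$ the evaluation map $\rho\mapsto\rho(u)$ is a bijection $T\to V(\Gamma)$; since $|T|=|V(\Gamma)|=10$, this reduces to checking that $\rho\nv\rho'$ moves every vertex whenever $\rho\ne\rho'$ in $T$.

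The key elementary observation I would record first is that a permutation $\pi\in S_5$ fixes some $2$-subset of $[5]$ if and only if $\pi$ is neither a $4$-cycle nor a $5$-cycle: a $5$-cycle acts on the ten $2$-subsets as two $5$-cycles, and a $4$-cycle as two $4$-cycles and a transposition, neither with a fixed point, while every other cycle type in $S_5$ has two fixed points or a transposition among its cycles and hence fixes a $2$-subset. It then remains to verify that every quotient $\rho\nv\rho'$ of distinct elements of $T$ is a $4$-cycle or a $5$-cycle. If $\rho,\rho'$ lie in the same coset of $\langle\sigma\rangle$, the quotient is a nontrivial power of $\sigma$, i.e.\ a $5$-cycle. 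If $\rho=\sigma^i$ and $\rho'=\tau\sigma^j$, the quotient is $\sigma^{-i}\tau\sigma^j=\sigma^{-i}(\tau\sigma^{\,j-i})\sigma^i$, a conjugate of $\tau\sigma^{\,j-i}$; the remaining mixed case gives the inverse of such an element. So the whole check collapses to the five one-line computations that $\tau,\tau\sigma,\tau\sigma^2,\tau\sigma^3,\tau\sigma^4$ are all $4$-cycles (they work out to $(2\,3\,5\,4),(1\,3\,2\,5),(1\,5\,3\,4),(1\,2\,4\,3),(1\,4\,5\,2)$), which I would carry out; since conjugates and inverses of $4$-cycles are again $4$-cycles, all quotients are accounted for.

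The only genuine obstacle is choosing $\tau$ so that the entire coset $\tau\langle\sigma\rangle$ consists of $4$-cycles: many natural choices fail --- for instance $\tau=(1\,2\,3\,4)$ makes $\tau\sigma$ a product of a $3$-cycle and a transposition, which fixes a $2$-subset --- but a brief search turns up $\tau=(2\,3\,5\,4)$, after which the conjugacy identity above trivializes the remaining verification. The existence of the maximal Schur set $T$ shows $\Gamma$ is uniformly vertex-transitive, which together with non-Cayleyness proves the proposition.
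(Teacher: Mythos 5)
Your proof is correct, and in fact the maximal Schur set you construct is literally the same one as in the paper: the paper takes $\beta=(1\,2\,3\,4\,5)$ and $\alpha=(1\,5\,3\,4)$ and uses $\langle\beta\rangle\cup\alpha\langle\beta\rangle$, and since your $\tau\sigma^2=(1\,5\,3\,4)=\alpha$, your coset $\tau\langle\sigma\rangle$ equals $\alpha\langle\beta\rangle$. What differs is the verification. The paper writes out the two $10\times 10$ permutation matrices explicitly and observes that $\sum_i\beta^i$ and $\sum_i\alpha\beta^i$ are the block-diagonal and block-antidiagonal matrices with $J_5$ blocks, summing to $J_{10}$. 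You instead verify the Schur-orthogonality pairwise, reducing via the coset structure and conjugation to the single observation that a permutation of $S_5$ is fixed-point-free on $2$-subsets exactly when it is a $4$-cycle or a $5$-cycle, and then checking that the five elements of $\tau\langle\sigma\rangle$ are all $4$-cycles (your computed cycle forms are correct). This is essentially the derangement-graph criterion of \cref{lem:clique} specialized to this example, and it is arguably more illuminating than the matrix computation, since it explains \emph{why} the choice of $\tau$ matters and generalizes more readily. You also supply a short self-contained argument for non-Cayleyness via \cref{prop:cay-reg} (a regular subgroup of order $10$ would force a point stabilizer on $[5]$ of order $2$, whose involution contains a transposition and hence fixes a vertex), where the paper simply cites \cite{MP94}; your argument is sound, granting the standard fact $\Aut(P)\simeq S_5$ which both proofs assume.
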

\begin{proof}
Recall that the Petersen graph $P$ is realized with $V(P)$ consisting of the two-element subsets of $\{1,2,3,4,5\}$ in which two subsets are adjacent if they are disjoint. It is well known that $\Aut(P)\simeq S_5$ and that the action of $\Aut(P)$ on $V(P)$ is induced by the standard action of $S_5$ on $\{1,2,3,4,5\}$. Let $\alpha, \beta \in S_5$ be given by $\alpha = (1\,5\,3\,4)$ and $\beta = (1\,2\,3\,4\,5)$. When viewed as permutations of the 10 vertices of the Petersen graph, $\alpha$ and $\beta$ are given by the permutation matrices
\[ \Tiny \alpha = \begin{blockarray}{ccccccccccc}
& \scriptstyle{12}
& \scriptstyle{23}
& \scriptstyle{34}
& \scriptstyle{45}
& \scriptstyle{15}
& \scriptstyle{13}
& \scriptstyle{24}
& \scriptstyle{35}
& \scriptstyle{14}
& \scriptstyle{25} \\
\begin{block}{c[cccccccccc]}
\scriptstyle{12}&&&&&&&1&&& \\
\scriptstyle{23}&&&&&&&&&&1 \\
\scriptstyle{34}&&&&&&&&1&& \\
\scriptstyle{45}&&&&&&1&&&& \\
\scriptstyle{15}&&&&&&&&&1& \\
\scriptstyle{13}&&&&1&&&&&& \\
\scriptstyle{24}&&1&&&&&&&& \\
\scriptstyle{35}&&&&&1&&&&& \\
\scriptstyle{14}&&&1&&&&&&& \\
\scriptstyle{25}&1&&&&&&&&& \\
\end{block}
\end{blockarray} \qquad \beta = \begin{blockarray}{ccccccccccc}
& \scriptstyle{12}
& \scriptstyle{23}
& \scriptstyle{34}
& \scriptstyle{45}
& \scriptstyle{15}
& \scriptstyle{13}
& \scriptstyle{24}
& \scriptstyle{35}
& \scriptstyle{14}
& \scriptstyle{25} \\
\begin{block}{c[cccccccccc]}
\scriptstyle{12}&&&&&1&&&&& \\
\scriptstyle{23}&1&&&&&&&&& \\
\scriptstyle{34}&&1&&&&&&&& \\
\scriptstyle{45}&&&1&&&&&&& \\
\scriptstyle{15}&&&&1&&&&&& \\
\scriptstyle{13}&&&&&&&&&&1 \\
\scriptstyle{24}&&&&&&1&&&& \\
\scriptstyle{35}&&&&&&&1&&& \\
\scriptstyle{14}&&&&&&&&1&& \\
\scriptstyle{25}&&&&&&&&&1& \\
\end{block}
\end{blockarray}
\]
in which we write 12 for the vertex $\{1,2\}$ of the Petersen graph, for instance. From these matrix descriptions, it is clear that
\begin{equation*}
\sum_{i=0}^4 \beta^i = \begin{bmatrix} J_5 & \\ & J_5 \end{bmatrix} \qquad \sum_{i=0}^4 \alpha\beta^i = \begin{bmatrix} & J_5 \\ J_5 & \end{bmatrix}
\end{equation*}
where $J_k$ is the $k\times k$ matrix with all entries 1. Therefore, we have 
\begin{equation*}
\sum_{i=0}^1\sum_{j=0}^4 \alpha^i\beta^j = J_{10}.
\end{equation*}
which shows that $\{\alpha^i \beta^j : 0 \leq i \leq 1 \text{ and } 0 \leq j \leq 4\} \subset \Aut(\Gamma)$ is a maximal Schur set. It follows that $P$ is uniformly vertex-transitive.

That $P$ is not a Cayley graph is well known, see for instance \cite{MP94}.
\end{proof}

The remaining question is whether there exist vertex-transitive graphs which are not uniformly vertex-transitive. We give an affirmative answer to this question in the following section, as well as techniques used to show this result and to determine whether or not general graphs are uniformly vertex-transitive.

\begin{rmk}
If $\Gamma$ is a vertex-transitive graph for which $|V(\Gamma)|=|\!\Aut(\Gamma)|$, \cref{prop:cay-reg} implies that $\Gamma$ is a Cayley graph for $\Aut(\Gamma)$. In this case, $\Gamma$ is furthermore uniformly vertex-transitive.
\end{rmk}

\section{Computational methods for detecting uniform vertex-transitivity}\label{sec:comp}

We consider an arbitrary finite graph $\Gamma$ on $n$ vertices. Our present motivation is to develop a computational strategy for determining whether $\Gamma$ is uniformly-vertex transitive. This problem is apparently harder than determining whether or not it is Cayley. Indeed, unlike the latter---which in light of \cref{prop:cay-reg} can be easily realized from knowing the subgroups of $\Aut(\Gamma)$---the former depends on the existence of maximal Schur sets, which are not subgroups of $\Aut(\Gamma)$ in general. A naive approach of checking all subsets of $\Aut(\Gamma)$ of a given size quickly becomes untenable for $\Gamma$ with sufficiently many vertices or automorphisms, so a more informed approach is necessary.

In this section, we consider the derangement graph $D(\Gamma)$ of $\Gamma$, which we will see encodes how elements of $\Aut(\Gamma)$ relate under the Schur product of permutation matrices. Culminating in  \cref{thm:clique_nhd}, we will see that the uniform vertex-transitivity of $\Gamma$ is equivalent to the existence of cliques of a certain size in $D(\Gamma)$. Because the problem of finding cliques in graphs is relatively well-studied and implemented, we propose this as an effective method of determining whether an arbitrary graph $\Gamma$ is uniformly-vertex transitive.

For a graph $\Gamma$ We denote by $\Der_{\Gamma}$ the subset of $\Aut(\Gamma)$ which consists of all permutations without fixed points. Such permutations are often called \emph{derangements}. 

\begin{defn}
Let $\Gamma$ be a graph. The \emph{derangement graph} $D(\Gamma)$ is a graph with vertex set $\Aut(\Gamma)$ in which automorphisms $\sigma$ and $\tau$ are adjacent if $\sigma\nv\tau \in\Der_\Gamma$. Thus, $D(\Gamma)$ coincides with the Cayley graph $C(\Aut(\Gamma),\Der_\Gamma)$.
\end{defn}

We observe that the derangement graph encodes the orthogonality of automorphisms with respect to the Schur product.

\begin{lem} \label{lem:der-schur}
For automorphisms $\sigma, \tau \in\Aut(\Gamma)$, we have $\sigma\nv\tau\in\Der_\Gamma$ if and only if the Schur product of $\sigma$ and $\tau$ is zero.
\end{lem}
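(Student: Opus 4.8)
The plan is to translate both sides of the equivalence into the same concrete condition on where the permutations $\sigma$ and $\tau$ send vertices. First I would fix the convention that the permutation matrix of $\sigma$ has $(i,j)$-entry equal to $1$ precisely when $\sigma(j)=i$, and note that $\sigma$ and $\tau$ are then $0$--$1$ matrices. The Schur product $\sigma\circ\tau$ has $(i,j)$-entry $\sigma_{ij}\tau_{ij}$, which again lies in $\{0,1\}$; hence $\sigma\circ\tau=0$ if and only if there is no pair $(i,j)$ with $\sigma_{ij}=\tau_{ij}=1$, equivalently no vertex $j\in V(\Gamma)$ with $\sigma(j)=\tau(j)$.

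Next I would rewrite the condition ``there exists $j$ with $\sigma(j)=\tau(j)$''. Applying $\sigma\nv$ to both sides, this is equivalent to ``there exists $j$ with $j=\sigma\nv\tau(j)$'', i.e. the automorphism $\sigma\nv\tau$ has a fixed point. Negating, $\sigma\circ\tau=0$ if and only if $\sigma\nv\tau$ has no fixed point, i.e. $\sigma\nv\tau\in\Der_\Gamma$, which is exactly the claim. If one's matrix convention instead produces $\tau\nv\sigma$ in this computation, the conclusion is unchanged, since a permutation has a fixed point exactly when its inverse does, so $\tau\nv\sigma\in\Der_\Gamma$ if and only if $\sigma\nv\tau\in\Der_\Gamma$.

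There is essentially no obstacle here: the statement is a direct unwinding of the definitions of the Schur product and of $\Der_\Gamma$, using only that permutation matrices have entries in $\{0,1\}$. The single point requiring care is bookkeeping the permutation-matrix convention and the corresponding left/right placement of the inverse, which is harmless in view of the remark above. This lemma is what lets adjacency in $D(\Gamma)$ be read as Schur-orthogonality of automorphisms, feeding into the clique-theoretic criterion developed in the rest of the section.
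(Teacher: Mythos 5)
Your proof is correct and follows essentially the same route as the paper's: unwinding the definitions to show that $\sigma\nv\tau$ having a fixed point is equivalent to $\sigma(j)=\tau(j)$ for some $j$, which is equivalent to the Schur product being nonzero. If anything, your write-up is more complete, since the paper only spells out one direction of the equivalence and your attention to the matrix convention and the placement of the inverse is a worthwhile precaution.
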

\begin{proof}
Suppose $\sigma\nv\tau$ is not a derangement, so there is a fixed point $i\in V(\Gamma)$ of $\sigma\nv\tau$, for which $\sigma\nv\tau(i)=i$. For such a vertex $i$, we have $\sigma(i) = \tau(i)$, which means the Schur product of $\sigma$ and $\tau$ is nonzero.
\end{proof}

In other words, the edges of $D(\Gamma)$ connect automorphisms with Schur product zero. Recall that a \emph{clique} in a graph $\Gamma$ is a subset $S\subset V(\Gamma)$ which induces a complete subgraph, i.e. any two vertices in $S$ are adjacent. 

\begin{lem} \label{lem:clique}
Let $\Gamma$ be a graph on $n$ vertices. A size $n$ subset $S \subset \Aut(\Gamma)$ is a maximal Schur set if and only if $S$ is a clique in $D(\Gamma)$.
\end{lem}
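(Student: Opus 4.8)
The plan is to reduce the statement to the equivalent characterizations already established, namely \cref{lem:eq-conditions} and \cref{lem:der-schur}. By \cref{lem:eq-conditions}, writing $S=\{\sigma_1,\dots,\sigma_n\}$, the set $S$ is a maximal Schur set if and only if for every pair $u,v\in V(\Gamma)$ there is a \emph{unique} index $i$ with $\sigma_i(u)=v$. On the other hand, by \cref{lem:der-schur}, two distinct automorphisms $\sigma,\tau\in\Aut(\Gamma)$ are adjacent in $D(\Gamma)$ precisely when $\sigma(i)\neq\tau(i)$ for every vertex $i$, i.e. when no two of them agree at a vertex. So both sides of the asserted equivalence are really statements of the form ``no two of the $\sigma_i$ agree at a common vertex,'' and the proof consists of matching them up, using crucially that $|S|=n=|V(\Gamma)|$.

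First I would show that a maximal Schur set $S$ of size $n$ is a clique in $D(\Gamma)$. Take distinct $\sigma_i,\sigma_j\in S$. If they were not adjacent in $D(\Gamma)$, then by \cref{lem:der-schur} their Schur product would be nonzero, so there would be a vertex $u$ with $\sigma_i(u)=\sigma_j(u)=:v$; but this contradicts the uniqueness in condition (c) of \cref{lem:eq-conditions}. Hence any two elements of $S$ are adjacent, so $S$ is a clique of size $n$ in $D(\Gamma)$.

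Conversely, suppose $S$ is a clique of size $n$ in $D(\Gamma)$. I would verify condition (b) of \cref{lem:eq-conditions}. Fix $u\in V(\Gamma)$ and consider the images $\sigma_1(u),\dots,\sigma_n(u)$. If $\sigma_i(u)=\sigma_j(u)$ for some $i\neq j$, then the Schur product of $\sigma_i$ and $\sigma_j$ is nonzero, so by \cref{lem:der-schur} they are non-adjacent in $D(\Gamma)$, contradicting that $S$ is a clique. Therefore the $n$ images $\sigma_1(u),\dots,\sigma_n(u)$ are pairwise distinct, and since $\Gamma$ has exactly $n$ vertices they exhaust $V(\Gamma)$. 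Thus for every $v\in V(\Gamma)$ there is an $i$ with $\sigma_i(u)=v$, which is precisely condition (b); by \cref{lem:eq-conditions} it follows that $\sum\sigma_i=J_n$, so $S$ is a maximal Schur set.

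There is no genuine obstacle here: the lemma is essentially a repackaging of \cref{lem:eq-conditions} together with \cref{lem:der-schur}. The only place a (trivial) counting step enters is the converse direction, where one observes that $n$ pairwise distinct vertices in an $n$-vertex graph must be all of $V(\Gamma)$ — this is the single point at which the hypothesis $|S|=n$ is used.
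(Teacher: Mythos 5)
Your proof is correct and takes essentially the same approach as the paper: both directions come down to the observation that pairwise Schur-orthogonality of the $\sigma_i$ is equivalent to no two of them agreeing at a vertex, combined with the counting step that $n$ pairwise distinct images must exhaust $V(\Gamma)$. The only cosmetic difference is that you route explicitly through \cref{lem:eq-conditions}, whereas the paper redoes the row-counting directly on the matrices.
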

\begin{proof}
Suppose that $S=\{\sigma_1,\ldots,\sigma_n\} \subset V(D(\Gamma))$ is a clique. That is, $\sigma_i$ and $\sigma_j$ have Schur product 0 for $i\neq j$. That is, for each row $k$, the matrices $\sigma_i$ and $\sigma_j$ have a 1 in different positions, so row $k$ of $\sigma_i+\sigma_j$ will consist of 1s in two entries and 0s otherwise. Continuing inductively, we see that each row of $\sum\sigma_i$ consists of all entries 1, so it follows that $\sum\sigma_i=J_n$, and thus $S$ is a maximal Schur set.

Conversely suppose that $S=\{\sigma_1,\ldots,\sigma_n\}\subset \Aut(\Gamma)$ is a maximal Schur set, so $\sum\sigma_i=J_n$. Take $\sigma_i$ and $\sigma_j$ for $i\neq j$. If $\sigma_i$ and $\sigma_j$ have nonzero Schur product, then $\sigma_i$ and $\sigma_j$ have a 1 in the same entry. However, if this was the case, $\sum\sigma_i$ would have an entry other than 1, which is a contradiction, as we assumed $\sum\sigma_i=J_n$. Thus $S$ is a clique in $D(\Gamma)$, as all $\sigma_i$ are adjacent.
\end{proof}

\begin{lem} \label{lem:clique_mult}
Suppose $S = \{\sigma_1,\ldots,\sigma_n\} \subset \Aut(\Gamma)$ is a maximal Schur set. For any $\alpha\in \Aut(\Gamma)$, the set $\alpha S=\{\alpha\sigma_1,\ldots,\alpha\sigma_n\}$ is also a maximal Schur set.
\end{lem}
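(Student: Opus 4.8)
The plan is to reduce the statement to the observation that left multiplication by a fixed automorphism is a symmetry of the relevant structure; I would present the short matrix argument and mention the clique-theoretic rephrasing as an alternative.

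First I would argue directly at the level of permutation matrices. Viewing each $\sigma_i$ and $\alpha$ as $n\times n$ permutation matrices, linearity gives $\sum_i \alpha\sigma_i = \alpha\bigl(\sum_i \sigma_i\bigr) = \alpha J_n$. Left multiplication by the permutation matrix $\alpha$ merely permutes the rows of $J_n$, and $J_n$ is invariant under row permutations, so $\alpha J_n = J_n$. Before concluding via \cref{lem:clique} (or the definition of a maximal Schur set), one must also check that $\alpha S$ has exactly $n$ elements: this is immediate because left multiplication in the group $\Aut(\Gamma)$ is injective, so the $\alpha\sigma_i$ are pairwise distinct whenever the $\sigma_i$ are. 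Hence $\alpha S$ is a size-$n$ subset of $\Aut(\Gamma)$ with $\sum_i \alpha\sigma_i = J_n$, i.e. a maximal Schur set.

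As an alternative I would invoke \cref{lem:clique}: $S$ is a maximal Schur set precisely when $S$ is a clique of size $n$ in $D(\Gamma)$. Since $D(\Gamma) = C(\Aut(\Gamma),\Der_\Gamma)$ is a Cayley graph, the map $\sigma\mapsto\alpha\sigma$ is an automorphism of $D(\Gamma)$ — exactly the observation used in the proof of \cref{prop:cay-reg} that left multiplication induces a graph automorphism of a Cayley graph. This automorphism carries the clique $S$ to a clique $\alpha S$ of the same size, and \cref{lem:clique} then identifies $\alpha S$ as a maximal Schur set. There is no genuine obstacle in either approach; the only point requiring a word of justification is the cardinality claim $|\alpha S| = n$, which follows from injectivity of group multiplication. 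I would likely include only the one-line matrix computation in the final text.
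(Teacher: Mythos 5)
Your primary argument is exactly the paper's proof: view everything as permutation matrices and compute $\sum_i \alpha\sigma_i = \alpha\sum_i\sigma_i = \alpha J_n = J_n$. The only difference is that you also verify $|\alpha S| = n$ via injectivity of left multiplication, a small point the paper leaves implicit; your clique-theoretic alternative is also valid but unnecessary.
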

\begin{proof}
View $g,\sigma_1,\ldots,\sigma_n\in G$ all as their corresponding permutation matrices. That $S$ is a maximal Schur set gives that $\sum\sigma_i=J_n$. Observe that
\begin{equation*}
\sum_{i=1}^n \alpha\sigma_i = \alpha \sum_{i=1}^n \sigma_i = \alpha J_n = J_n
\end{equation*}
which shows that $\alpha S$ is also a maximal Schur set.
\end{proof}

\begin{cor} \label{lem:id_clique}
If $\Gamma$ is uniformly vertex-transitive, then $\Aut(\Gamma)$ contains a maximal Schur set which contains the identity automorphism.
\end{cor}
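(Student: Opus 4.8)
The plan is to deduce this immediately from \cref{lem:clique_mult}. Since $\Gamma$ is uniformly vertex-transitive, by definition there is a maximal Schur set $S = \{\sigma_1,\ldots,\sigma_n\} \subset \Aut(\Gamma)$; note $n = |V(\Gamma)| \geq 1$, so $S$ is nonempty and we may fix one of its elements, say $\sigma_1$. First I would form the set $\sigma_1^{-1} S = \{\sigma_1^{-1}\sigma_1, \ldots, \sigma_1^{-1}\sigma_n\}$, which contains the identity automorphism $\sigma_1^{-1}\sigma_1 = \mathrm{id}$.

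Then I would invoke \cref{lem:clique_mult} with $\alpha = \sigma_1^{-1} \in \Aut(\Gamma)$ to conclude that $\sigma_1^{-1} S$ is again a maximal Schur set. One small point to check is that left multiplication by $\sigma_1^{-1}$ is injective on $\Aut(\Gamma)$, so that $\sigma_1^{-1}S$ genuinely has $n$ distinct elements (and hence is a legitimate maximal Schur set rather than a multiset); this is immediate since $\Aut(\Gamma)$ is a group. Putting these together, $\sigma_1^{-1}S$ is a maximal Schur set containing $\mathrm{id}$, which is exactly the claim.

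There is no real obstacle here — this is a direct corollary, and the only thing worth stating carefully is the nonemptiness of $S$ and the injectivity of the translation map, both of which are trivial. If desired, one could phrase the argument even more symmetrically by noting that for any $\alpha \in S$, the translated set $\alpha^{-1}S$ is a maximal Schur set through the identity, so in fact there are many such sets.
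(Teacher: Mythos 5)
Your proof is correct and is essentially identical to the paper's: both translate a given maximal Schur set $S$ by $\sigma_1^{-1}$ and invoke \cref{lem:clique_mult} to conclude that $\sigma_1^{-1}S$ is a maximal Schur set containing the identity. The extra remarks about nonemptiness and injectivity of left translation are fine but not needed.
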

\begin{proof}
Let $S=\{\sigma_1,\ldots,\sigma_n\}$ be a maximal Schur set in $\Aut(\Gamma)$. Observe that the set $\sigma_1^{-1}S = \{\text{id}, \sigma_1^{-1}\sigma_2, \ldots, \sigma_1^{-1}\sigma_n\}$ is a maximal Schur set, by  \cref{lem:clique_mult}, which contains the identity.
\end{proof}

Before preceding, we recall some more terminology from graph theory. The \emph{clique number} $\omega(\Gamma)$ of a graph $\Gamma$ is the maximum size of a clique in $\Gamma$. The \emph{neighborhood} of a vertex $v\in V(\Gamma)$ is the induced subgraph on the set of vertices adjacent to $v$ in $\Gamma$, and is denoted by $\Gamma_v$.

\begin{thm} \label{thm:clique_nhd}
Let $\Gamma$ be a graph on $n$ vertices. $\Gamma$ is uniformly vertex-transitive if and only if $\omega(D(\Gamma)_{\text{id}}) = n-1$.
\end{thm}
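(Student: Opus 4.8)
The plan is to reduce the statement to \cref{lem:clique}, which identifies maximal Schur sets with size-$n$ cliques of $D(\Gamma)$, together with elementary bookkeeping relating cliques of a graph through a fixed vertex to cliques of that vertex's neighborhood. The starting observation is the bound $\omega(D(\Gamma)) \le n$: by \cref{lem:der-schur}, a clique in $D(\Gamma)$ is exactly a family of permutation matrices with pairwise-zero Schur product, i.e.\ with pairwise disjoint supports; each such matrix has $n$ nonzero entries, so a clique of size $k$ occupies $kn \le n^2$ entries, forcing $k \le n$, with equality precisely when the supports partition all of $\{1,\dots,n\}^2$, i.e.\ when the matrices sum to $J_n$. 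Thus, by \cref{lem:clique}, $D(\Gamma)$ has an $n$-clique if and only if $\Gamma$ is uniformly vertex-transitive.

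Next I would identify $D(\Gamma)_{\text{id}}$ as the induced subgraph of $D(\Gamma)$ on the derangements $\Der_\Gamma$ (since $\mathrm{id} \sim \tau$ iff $\mathrm{id}^{-1}\tau = \tau \in \Der_\Gamma$), and invoke the general fact that for any vertex $v$ of a graph $H$, the map $C \mapsto C \cup \{v\}$ is a bijection from cliques of $H_v$ to cliques of $H$ containing $v$; hence the largest clique of $H$ through $v$ has size $\omega(H_v) + 1$. Applied with $H = D(\Gamma)$ and $v = \mathrm{id}$, and combined with $\omega(D(\Gamma)) \le n$, this already gives $\omega(D(\Gamma)_{\text{id}}) \le n - 1$ unconditionally.

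It remains to match the two sides. If $\Gamma$ is uniformly vertex-transitive, \cref{lem:id_clique} supplies a maximal Schur set containing $\mathrm{id}$; by \cref{lem:clique} this is an $n$-clique of $D(\Gamma)$ through $\mathrm{id}$, so deleting $\mathrm{id}$ leaves an $(n-1)$-clique in $D(\Gamma)_{\text{id}}$, whence $\omega(D(\Gamma)_{\text{id}}) = n-1$. Conversely, an $(n-1)$-clique $C$ in $D(\Gamma)_{\text{id}}$ yields an $n$-clique $C \cup \{\mathrm{id}\}$ in $D(\Gamma)$, which is a maximal Schur set by \cref{lem:clique}, so $\Gamma$ is uniformly vertex-transitive. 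I do not expect a genuine obstacle here; the only points needing care are the counting bound $\omega(D(\Gamma)) \le n$ and the faithful translation between cliques of $D(\Gamma)$ through $\mathrm{id}$ and cliques of its neighborhood $D(\Gamma)_{\text{id}}$.
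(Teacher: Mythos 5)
Your argument is essentially the paper's own proof: both directions pass through \cref{lem:id_clique} and \cref{lem:clique}, translating an $n$-clique of $D(\Gamma)$ through $\mathrm{id}$ into an $(n-1)$-clique of the neighborhood $D(\Gamma)_{\mathrm{id}}$ and back. The only difference is that you explicitly justify the upper bound $\omega(D(\Gamma))\le n$ via the disjoint-supports count, a point the paper leaves implicit when it asserts equality rather than just $\ge n-1$; this is a small but genuine tightening, not a different route.
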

\begin{proof}
Suppose that $\Gamma$ is uniformly vertex-transitive. In light of the previous corollary, let $S\subset \Aut(\Gamma)$ be a maximal Schur set which contains $\text{id}\in\Aut(\Gamma)$. By \cref{lem:clique_mult}, $S$ is a clique in the graph $D(\Gamma)$. It follows that $S-\{\text{id}\}$ is a clique in $D(\Gamma)_{\text{id}}$ of size $n-1$. Thus, $\omega(D(\Gamma)_{\text{id}}) = n-1$.

In the opposite direction, suppose that $\omega(D(\Gamma)_{\text{id}}) = n-1$, so there exists a size $n-1$ clique $C\subset V(D(\Gamma))$ consisting of neighbors of $\text{id}\in D(\Gamma)$. The set $C\cup \{\text{id}\}$ is then a clique of size $n$ in $D(\Gamma)$. By \cref{lem:clique}, $C\cup \{\text{id}\}$ is a maximal Schur set in $\Aut(\Gamma)$, so $\Gamma$ is thus uniformly vertex-transitive.
\end{proof}

We now apply the above methods to the line graph of the Petersen graph. Recall that for a finite graph $\Gamma$, its \emph{line graph} $L(\Gamma)$ is a graph with vertices $E(\Gamma)$ in which edges $e_1,e_2\in E(\Gamma)$ are adjacent in $L(\Gamma)$ if $e_1$ and $e_2$ share a common vertex in $\Gamma$.

\begin{cor} \label{thm:line_petersen}
The line graph of the Petersen graph is vertex-transitive, but not uniformly vertex-transitive.
\end{cor}
\begin{proof}
Let $P$ be the Petersen graph, and let $L(P)$ denote its line graph. Using Sage \cite{sage}, we determined that $\omega(D(L(P))_{\textrm{id}}) = 12$. Now, $L(P)$ has 15 vertices, thus by \cref{thm:clique_nhd}, we have that $L(P)$ is not uniformly vertex-transitive. That $L(P)$ is vertex-transitive follows from the fact that $P$ is edge-transitive, as can be seen in \cite[Theorem 4.7]{HS93}.
\end{proof}

In light of \cref{thm:line_petersen}, we may conclude that the converses of the aforementioned chain of implications of graph properties do not hold, that is:
\[ \text{Cayley} \centernot\Longleftarrow \text{uniformly vertex-transitive}
\centernot\Longleftarrow \text{vertex-transitive} \]
Thus, the property of uniform vertex-transitive sits strictly between vertex-transitivity and being a Cayley graph, settling our motivating question from \cite{NSW19}.

\section{Uniform vertex-transitivity for imprimitive graphs}\label{sec:imp}

In this section, we study uniform vertex-transitivity for a special class of graphs, those graphs whose automorphism group is imprimitive. As in the previous section, we consider a finite graph $\Gamma$ on $n$ vertices. We begin by recalling the notion of primitivity from permutation group theory.

\begin{defn}
Let $G$ be a group which acts transitively on a set $X$. A \emph{block} of $G$ is a subset $B\subset X$ for which $gB=B$ or $gB\cap B=\emptyset$ for all $g\in G$. A block $B$ is \emph{trivial} if $B$ is a singleton or if $B=X$, and is \emph{nontrivial} otherwise. If $G$ has a nontrivial block, then $G$ acts \emph{imprimitively}, and \emph{primitively} otherwise.
\end{defn}

As mentioned earlier, we are interested in the case in which a graph has an imprimitive automorphism group. We call a graph $\Gamma$ an \emph{imprimitive graph} if the action of $\Aut(\Gamma)$ on $V(\Gamma)$ is imprimitive. 

\begin{defn}
For $G$ a group acting transitively on a set $X$ with a block $B$ of $G$, the set $\bB=\{ gB : g\in G\}$ is called a \emph{block system} of $B$.
\end{defn}

% CITE THE NOTES HERE
It is well known that a block system $\bB$ forms a partition of $X$ and that each block has the same cardinality. Consider an imprimitive group $G$ with nontrivial block system $\bB$ consisting of $m$ blocks of size $k$. Then, the action of $G$ on $\bB$ induces a group homomorphism $G \to \Sym(\bB) \simeq S_m$. The kernel of this homomorphism is the \emph{fixer} $\fix_G(\bB)$ of $\bB$ in $G$, which is the subgroup of $G$ consisting of automorphisms which leave each block in place. In the situation of an automorphism group of a graph $\Gamma$, we will often write $\fix_{\Gamma}(\bB)$ to mean $\fix_{\Aut(\Gamma)}(\bB)$. The quotient $G/\!\fix_G(\bB)$ then has a faithful and transitive action on the blocks $\bB$. 

We wish to study uniform vertex-transitivity in the context of such imprimitive graphs. First, we abstract the notion of uniform vertex-transitivity to general permutation groups.

\begin{defn}
Let $G$ be a group acting on a set $X$ with $|X|=n$. The group $G$ is \emph{uniformly transitive} if there exists a size $n$ subset $\{\sigma_1,\ldots,\sigma_n\} \subset G$ such that, when viewed as matrices, $\sum \sigma_i = J_n$. Such a subset $\{\sigma_1,\ldots,\sigma_n\}$ is called a \emph{maximal Schur set}.
\end{defn}

\begin{defn}
Let $\Gamma$ be an imprimitive graph with nontrivial block system $\bB$ of $\Aut(\Gamma)$, which has $m$ blocks of size $k$. The block system $\bB$ is \emph{factorizing} if 
\begin{enumerate}[(i)]
\item the group $\fix_\Gamma(\bB)$ contains a size $k$ subset of element which are mutually orthogonal with respect to the Schur product of matrices, and
\item the group $\Aut(\Gamma)/\!\fix_\Gamma(\bB)$ acts uniformly transitively on $\bB$.
\end{enumerate} 
\end{defn}

Intuitively, a block system $\bB$ for $\Aut(\Gamma)$ is factorizing both the action on the blocks and the action within the blocks admit maximal Schur sets. We now turn to the main result of this section.

\begin{thm}\label{thm:factor}
Let $\Gamma$ be an imprimitive graph on $n$ vertices. If $\Aut(\Gamma)$ has a factorizing block system, then $\Gamma$ is uniformly vertex-transitive.
\end{thm}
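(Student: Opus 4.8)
The plan is to build an explicit maximal Schur set of size $n = mk$ inside $\Aut(\Gamma)$ out of the two pieces of data supplied by a factorizing block system. Write $\bB$ for the block system, with $m$ blocks of size $k$. From condition~(i) take a size $k$ subset $\{\tau_1,\dots,\tau_k\}\subset\fix_\Gamma(\bB)$ that is mutually Schur-orthogonal, and from condition~(ii) take a maximal Schur set $\{\bar\rho_1,\dots,\bar\rho_m\}$ for the action of $\Aut(\Gamma)/\!\fix_\Gamma(\bB)$ on $\bB$, together with lifts $\rho_1,\dots,\rho_m\in\Aut(\Gamma)$. The candidate maximal Schur set is
\[ \Sigma = \{\rho_i\tau_j : 1\le i\le m,\ 1\le j\le k\}. \]

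First I would unpack condition~(i) block by block. Ordering $V(\Gamma)$ so that each block occupies consecutive coordinates, every element of $\fix_\Gamma(\bB)$ is block-diagonal; write $\tau_j^B$ for the $k\times k$ permutation matrix giving the action of $\tau_j$ on a block $B$. Schur-orthogonality of $\tau_j$ and $\tau_{j'}$ forces Schur-orthogonality of $\tau_j^B$ and $\tau_{j'}^B$ for each $B$, so for every block $B$ the matrices $\tau_1^B,\dots,\tau_k^B$ are $k$ pairwise Schur-orthogonal $k\times k$ permutation matrices; by the counting argument in the proof of \cref{lem:eq-conditions} (which uses nothing but the combinatorics of permutation matrices) this gives $\sum_{j=1}^k \tau_j^B = J_k$. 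Concretely: if $u,w$ lie in a common block, there is a \emph{unique} $j$ with $\tau_j(u)=w$. Likewise, condition~(ii) together with \cref{lem:eq-conditions} (applied to the action on $\bB$) says that for any two blocks $B,B'$ there is a unique $i$ with $\bar\rho_i(B)=B'$, and since $\rho_i$ induces $\bar\rho_i$ on $\bB$, this reads $\rho_i(B)=B'$.

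Then I would verify condition~(b) of \cref{lem:eq-conditions} for $\Sigma$. Fix $u\in V(\Gamma)$ lying in a block $B$ and $v\in V(\Gamma)$ lying in a block $B'$. For any $i,j$ we have $\rho_i\tau_j(u)\in\rho_i(B)$ since $\tau_j$ fixes $B$ setwise, so $\rho_i\tau_j(u)=v$ forces $\rho_i(B)=B'$, which pins down $i$ uniquely by the previous paragraph. For that $i$, the equation $\rho_i\tau_j(u)=v$ is equivalent to $\tau_j(u)=\rho_i^{-1}(v)$, and both sides lie in $B$, so it pins down $j$ uniquely as well. Hence each pair $(u,v)$ is realized by exactly one $\rho_i\tau_j$; this is condition~(c) of \cref{lem:eq-conditions}, so in particular the $mk$ products are pairwise distinct, $|\Sigma|=n$, and $\sum_{i,j}\rho_i\tau_j = J_n$. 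Thus $\Sigma$ is a maximal Schur set and $\Gamma$ is uniformly vertex-transitive.

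The only step with genuine content is the localization in the second paragraph: recognizing that the block-diagonal shape of elements of $\fix_\Gamma(\bB)$ makes the mutual Schur-orthogonality of condition~(i) restrict to each individual block and, by a counting argument, upgrade there to a full maximal Schur set. Everything afterward is bookkeeping — the product $\rho_i\tau_j$ splits the journey of a vertex into a within-block move handled by the $\tau_j$ and a block-to-block move handled by the $\rho_i$, and the two uniqueness statements compose cleanly. A minor point worth flagging is that \cref{lem:eq-conditions} is being invoked for permutation groups that are not full automorphism groups of graphs, but its proof never uses the graph structure, only the $0$--$1$ structure of permutation matrices, so this is legitimate.
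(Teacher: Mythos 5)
Your proof is correct and follows essentially the same route as the paper's: both build the maximal Schur set as the product set of a mutually Schur-orthogonal $k$-subset of $\fix_\Gamma(\bB)$ with a lift of a maximal Schur set for the quotient action on $\bB$. The only difference is presentational --- the paper verifies $\sum_{i,j}\alpha_i\beta_j = J_n$ by a block-matrix multiplication, while you verify the equivalent pointwise uniqueness condition of \cref{lem:eq-conditions}; your explicit localization of condition~(i) to each block is a detail the paper's proof asserts without spelling out.
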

\begin{proof}
Suppose $\bB=\{B_1,\ldots,B_m\}$ consists of $m$ blocks of size $k$. Assume without loss of generality that the vertices of $\Gamma$ are ordered in such a way that the first $k$ vertices are in $B_1$, the next $k$ in $B_2$ and so forth. Let $A=\{\alpha_1,\ldots,\alpha_k\}$ be a mutually orthogonal subset of size $k$ in $\fix_\Gamma(\bB)$ and let $B'=\{\beta_1',\ldots,\beta_m'\}$ be a maximal Schur set of the action of $G/\!\fix_\Gamma(\bB)$ on $\bB$. Let $B=\{\beta_1,\ldots,\beta_m\}$ be a lift of $B'$ to $\Aut(\Gamma)$, i.e. the image of $B\subset\Aut(\Gamma)$ under the canonical surjection $G\to G/\!\fix_\Gamma(\bB)$ is $B'$. 

We have that $\sum\alpha_i$ has the block diagonal matrix form
\[ \sum_{i=1}^k \alpha_i = \begin{bmatrix} J_k & \cdots & 0 \\ \vdots & \ddots & \vdots \\ 0 & \cdots & J_k \end{bmatrix} \] 
consisting of a matrix of $m\times m$ blocks of size $k\times k$. Furthermore, $\sum\beta_j$ has the block form
\[ \sum_{j=1}^m \beta_j = \begin{bmatrix} P_{1,1} & \cdots &  P_{1,m} \\ \vdots & \ddots & \vdots \\ P_{m,1} & \cdots & P_{m,m} \end{bmatrix} \] 
in which each $P_{i,j}$ is some $k\times k$ permutation matrix. Finally, we have that
\begin{align*} \sum_{i=1}^k \sum_{j=1}^m \alpha_i\beta_j &= \left( \sum_{i=1}^k \alpha_i \right) \left( \sum_{j=1}^m \beta_j \right) = \begin{bmatrix} J_k & \cdots & 0 \\ \vdots & \ddots & \vdots \\ 0 & \cdots & J_k \end{bmatrix} \begin{bmatrix} P_{1,1} & \cdots &  P_{1,m} \\ \vdots & \ddots & \vdots \\ P_{m,1} & \cdots & P_{m,m} \end{bmatrix} \\&= \begin{bmatrix} J_k & \cdots & J_k \\ \vdots & \ddots & \vdots \\ J_k & \cdots & J_k \end{bmatrix} = J_{mk} = J_n
\end{align*}
which shows that the set $AB=\{ \alpha_i\beta_j : 1 \leq i \leq k, 1 \leq j \leq m\}$ is a maximal Schur set of $\Aut(\Gamma)$. Thus $\Gamma$ is uniformly vertex-transitive.
\end{proof}

This proof motivates the name of a factorizing block system. Indeed, the maximal Schur set obtained in the end ``factors'' into the product of the sets $A$ and $B$, which are given in terms of the block system $\bB$.

In practice, \cref{thm:factor} is already useful in determining the uniform vertex-transitivity of graphs with relatively large automorphism group, graphs for which the computational methods in the last section alone are not sufficient. With a factorizing block system for a graph $\Gamma$, one can hope to find a maximal Schur set in $\Aut(\Gamma)$ by applying the methods from the previous section to the action of $\fix_\Gamma(\bB)$ and the action of $G/\!\fix_\Gamma(\bB)$ on $\bB$. This amounts to determining the clique number of two smaller graphs as opposed to the clique number of a larger graph. 

Computational evidence suggests that the converse of \cref{thm:factor} holds. Indeed, for every imprimitive graph which is known to be uniformly vertex-transitive (these will be discussed in the following section and in the appendix), there exists a factorizing block system for its automorphism group. Unfortunately, we are unable to prove the equivalence of uniform vertex-transitivity and the existence of factorizing block systems.

We finish this section with two more results on imprimitive graphs.

\begin{prop} \label{prop:simple}
If $\Gamma$ is an imprimitive graph such that $\Aut(\Gamma)$ is simple, then $\Aut(\Gamma)$ does not admit a factorizing block system.
\end{prop}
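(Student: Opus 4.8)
The plan is to exploit the fact that the fixer $\fix_\Gamma(\bB)$ of any block system $\bB$ is a \emph{normal} subgroup of $\Aut(\Gamma)$, being the kernel of the homomorphism $\Aut(\Gamma) \to \Sym(\bB)$ described just before the definition of a factorizing block system. Since $\Aut(\Gamma)$ is simple, this immediately forces $\fix_\Gamma(\bB)$ to be either the trivial group or all of $\Aut(\Gamma)$, and the whole proof is then a short case analysis against the two requirements in the definition of factorizing.

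First I would rule out $\fix_\Gamma(\bB) = \Aut(\Gamma)$. A nontrivial block system $\bB$ consists of $m$ blocks of size $k$ with $m, k \geq 2$; in particular $|\bB| = m \geq 2$. As recalled in the excerpt, the quotient $\Aut(\Gamma)/\fix_\Gamma(\bB)$ acts transitively on $\bB$, and a group acting transitively on a set of size $m \geq 2$ must be nontrivial. Hence $\fix_\Gamma(\bB) \neq \Aut(\Gamma)$, so $\fix_\Gamma(\bB)$ is the trivial group $\{\text{id}\}$.

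The contradiction with factorizability is then immediate: condition (i) in the definition of a factorizing block system requires $\fix_\Gamma(\bB)$ to contain a subset of size $k$ whose elements are mutually orthogonal under the Schur product. But $\fix_\Gamma(\bB) = \{\text{id}\}$ has only one element, whereas $k \geq 2$, so no such subset exists. Therefore $\bB$ is not factorizing. Since $\bB$ was an arbitrary nontrivial block system of $\Aut(\Gamma)$ — and an imprimitive graph has at least one — we conclude that $\Aut(\Gamma)$ admits no factorizing block system.

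I do not expect a genuine obstacle here; the only points requiring care are the bookkeeping around trivial versus nontrivial blocks (so that $k \geq 2$ is guaranteed and a singleton fixer cannot house a size-$k$ orthogonal set) and the observation that it is precisely the simplicity of $\Aut(\Gamma)$ that collapses the normal subgroup $\fix_\Gamma(\bB)$ to one of the two extremes, together with transitivity on the blocks to eliminate the full-group extreme.
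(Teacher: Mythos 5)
Your proof is correct and follows essentially the same route as the paper's: the fixer is normal as the kernel of the map to $\Sym(\bB)$, simplicity forces it to be trivial or all of $\Aut(\Gamma)$, and each extreme is incompatible with one of the two conditions defining a factorizing block system. The only cosmetic difference is that you dispose of the case $\fix_\Gamma(\bB)=\Aut(\Gamma)$ by noting the quotient must act transitively on $m\geq 2$ blocks, whereas the paper notes directly that a trivial quotient cannot act uniformly transitively; these are the same observation.
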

\begin{proof}
Let $\Gamma$ be an imprimitive graph and let $\bB$ be a block system for the action of $\Aut(\Gamma)$ on $V(\Gamma)$. As above, let each block of $\bB$ have size $k$. It is clear from the above construction that $\fix_\Gamma(\bB)$ is a normal subgroup of $\Aut(\Gamma)$, realized as the kernel of a homomorphism. Since $\Aut(\Gamma)$ is simple by assumption, we have that $\fix_\Gamma(\bB)$ is either trivial or all of $\Aut(\Gamma)$. If $\fix_\Gamma(\bB)$ is trivial, then it cannot contain a Schur set of size $k$, where $k>1$ necessarily from the definition of imprimitivity. Similarly, if $\fix_\Gamma(\bB)=\Aut(\Gamma)$, then $\Aut(\Gamma)/\fix_\Gamma(\bB)$ is trivial, and hence does not act uniformly transitively on $\bB$.
\end{proof}

\begin{prop} \label{prop:a5}
If $\Gamma$ is a vertex-transitive graph with $\Aut(\Gamma)\simeq A_5$, then $\Gamma$ is imprimitive.
\end{prop}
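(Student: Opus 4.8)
The plan is to argue by contradiction, using the standard fact that a transitive group action is primitive if and only if every point stabilizer is a maximal subgroup. Since a graph automorphism is determined by its effect on the vertices, $\Aut(\Gamma)\simeq A_5$ acts faithfully and transitively on $V(\Gamma)$. Suppose this action is primitive; then a vertex stabilizer is a maximal subgroup of $A_5$. Up to conjugacy the maximal subgroups of $A_5$ are $A_4$ (order $12$, index $5$), the dihedral group $D_{10}$ (order $10$, index $6$), and $S_3$ (order $6$, index $10$), so $n := |V(\Gamma)|$ must be $5$, $6$, or $10$. It then remains to show that in each of these three degrees an $A_5$-invariant graph necessarily has automorphism group strictly larger than $A_5$, contradicting $\Aut(\Gamma)\simeq A_5$.

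For $n=5$ the action in question is the natural $3$-transitive action of $A_5$ on five points, and for $n=6$ it is the $2$-transitive action of $\PSL(2,5)\simeq A_5$ on the projective line $\PP^1(\FF_5)$. A $2$-transitive permutation group preserves no graph other than the complete graph $K_n$ and its complement $\overline{K_n}$, each of which has automorphism group $S_n\supsetneq A_5$. Hence these two cases are immediately excluded.

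The case $n=10$ is the main obstacle, since this primitive action is not $2$-transitive and one must pin down exactly which graphs it preserves. I would realize it as the action of $A_5$ on the ten two-element subsets of $\{1,2,3,4,5\}$: the setwise stabilizer of a $2$-subset in $A_5$ is a copy of $S_3$, and all copies of $S_3$ in $A_5$ are conjugate, so this is the degree-$10$ primitive action. This action has rank $3$, its orbits on ordered pairs of distinct $2$-subsets being ``meet in one point'' and ``disjoint'' --- exactly the orbitals of the overgroup $S_5$ on the same set. Consequently $A_5$ and $S_5$ preserve precisely the same graphs on these ten vertices, namely $K_{10}$, $\overline{K_{10}}$, the triangular graph $T(5)=L(K_5)$, and its complement the Petersen graph; in every case the automorphism group contains $S_5\supsetneq A_5$, again contradicting $\Aut(\Gamma)\simeq A_5$.

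With all three degrees eliminated, the action of $\Aut(\Gamma)$ on $V(\Gamma)$ cannot be primitive, so $\Gamma$ is imprimitive. Beyond the rank-$3$ analysis in degree $10$, the only inputs are the (standard) classification of maximal subgroups of $A_5$ and the (elementary) fact that a $2$-transitive group forces an invariant graph to be complete or edgeless; I expect the degree-$10$ bookkeeping to be the only part requiring genuine care.
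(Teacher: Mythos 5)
Your proof is correct, and it shares its skeleton with the paper's argument --- both reduce to the observation that a primitive action has maximal point stabilizers, identify the three conjugacy classes of maximal subgroups of $A_5$ (of indices $5$, $6$, and $10$), and then rule out each of the resulting degrees. Where you diverge is in how the three degrees are eliminated: the paper simply consults Royle's database of vertex-transitive graphs on $5$, $6$, and $10$ vertices and checks that none of them has automorphism group $A_5$ acting primitively, whereas you classify the invariant graphs by hand --- the $2$-transitivity of the degree-$5$ and degree-$6$ actions forces $K_n$ or $\overline{K_n}$, and the rank-$3$ analysis of the action on $2$-subsets shows the orbitals coincide with those of $S_5$, so the only candidates in degree $10$ are $K_{10}$, $\overline{K_{10}}$, $L(K_5)$, and the Petersen graph, all with automorphism group containing $S_5$. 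Your route is self-contained and verifiable without computational resources, and it explains \emph{why} the exclusion happens (every $A_5$-invariant graph in these degrees is automatically $S_5$-invariant); the paper's route is shorter and defers the case analysis to a trusted dataset. The degree-$10$ step is the only one requiring real care, and your treatment of it --- identifying the stabilizer as $S_3$, noting all order-$6$ subgroups of $A_5$ are conjugate, and computing the suborbits --- is sound.
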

\begin{proof}
We show that $A_5$ does does not occur as the automorphism group of a primitive vertex-transitive graph. Recall from \cite[Corollary 1.5A]{DM96} that the primitive actions of a group $G$ are characterized by their point stabilizers being maximal subgroups of $G$. A transitive group action is determined up to conjugacy by its point stabilizer. Thus, the primitive actions of a group $G$ correspond to the maximal subgroups of $G$. In the case of $A_5$, there are three conjugacy classes of maximal subgroups, as seen in \cite{ATLAS}, for instance. These correspond to three primitive actions of $A_5$ on 5 points, 6 points, and 10 points, respectively. By consulting a database \cite{R97} of all possible vertex-transitive graphs on 5, 6, and 10 vertices, we verify that none of these primitive actions of $A_5$ occur as the automorphism group of a vertex-transitive graph.
\end{proof}

Note that in the case the converse of \cref{thm:factor} was true, \cref{prop:simple} and \cref{prop:a5} would imply that there is no uniformly vertex-transitive graph with automorphism group $A_5$.

\section{Experimental results}\label{sec:exp}

The basis for our experimental results is a database \cite{R97} of vertex-transitive non-Cayley graphs on up to 30 vertices\footnote{The website for this database remarks that it is ``guaranteed correct only up to 26 vertices.'' However, the completeness of this dataset is supported by later work by Royle \cite[Table 3]{HR19}, which lists the same counts of vertex-transitive non-Cayley graphs as appear in the dataset.}. Indeed, the motivation for much of our results was to develop methods to classify these graphs into the uniformly vertex-transitive and non-uniformly vertex-transitive ones. We carried out these methods using the SageMath environment \cite{sage} and GAP \cite{GAP4}. We first present a summary of our findings in \cref{tab:counts}. 

\begin{table}[ht] 
\begin{tabular}{|c|cc|ccc|} \hline
Vertices & VT & non-Cayley & UVT & non-UVT & Unknown \\ \hline\hline
10 & 22 & 2 & 2 & & \\
15 & 48 & 4 & & 4 & \\
16 & 286 & 8 & 8 & & \\
18 & 380 & 4 & 4 & & \\
20 & 1214 & 82 & 70 & 12 & \\
24 & 15506 & 112 & 112 & & \\
26 & 4236 & 132 & 132 & & \\
28 & 25850 & 66 & 24 & 38 & 4\\
30 & 46308 & 1124 & 324 & 730 & 70 \\\hline
\end{tabular}
\\[3mm]\caption{The counts of graphs with various properties on numbers of vertices for which there exist vertex-transitive non-Cayley graphs.}\label{tab:counts}\end{table}

Outside of two complementary pairs of graphs on 28 vertices, the classification of uniformly vertex-transitive graphs on less than vertices is complete. A complete descriptions of these graphs by their automorphism groups can be found in the appendix. One of the two remaining complementary pairs of graphs has automorphism group $2\times\PSL(2,13)$, of order 2184. This is, in practice, too large to compute using only the clique number of the orthogonality graph. However, it has been computed that none of the block systems for this automorphism group are factorizing, so the converse of \cref{thm:factor}, if true, would imply that this pair of graphs is not uniformly vertex-transitive.

The other complementary pair of graphs on 28 vertices consists of the Johnson graph $J(8,2)$ and its complement. This pair of graphs has automorphism group $S_8$ of order 40320 acting primitively, so determining the uniform vertex-transitivity of this graph is outside the means of our current methods. The smaller Johnson graphs $J(n,2)$ exhibit curious behavior. The graph $J(5,2)$ is the complement of the Petersen graph, which is uniformly-vertex transitive, but not Cayley. The graph $J(6,2)$ is the line graph of $K_6$, which is not uniformly vertex-transitive (indeed, it is the $S_6$ entry in the \cref{tab:nuvt}). Most curiously, the graph $J(7,2)$ is a Cayley graph for the group $7:3$. Further methods to determine the uniform-vertex transitivity of graphs which have a large primitive automorphism group would be desireable, in order to settle the case of $J(8,2)$ and other graphs with large primitive automorphism groups.

\begin{table}[ht]
\begin{tabular}{lcccc} \hline
Group $G$ & Degree & $\omega(D(G))-\deg(G)$ & Graphs & ID\# \\\hline
$2^3 : 7$ & 28 & $-21$ &18& $(28,11)$ \\
$A_5$ & 20 & $-10$ &4& $(20,15)$ \\
& 30 & $-17$ &382& $(30,9)$ \\
$2 \times A_5$ & 20 & $-8$ &8& $(20,36)$ \\
& 30 & $-4$ &88& $(30,29)$ \\
& 30 & $-4$ &32& $(30,30)$ \\
$S_5$ & 15 & $-2$ &2& $(15,10)$ \\
& 30 & $-17$ &22& $(30,22)$ \\
& 30 & $-4$ &32& $(30,25)$ \\
& 30 & $-14$ &90& $(30,27)$ \\
$\PSL(3,2)$ & 28 & $-18$ &4& $(28,32)$ \\
$2 \times S_5$ & 30 & $-4$ &40& $(30,58)$ \\
& 30 & $-4$ &44& $(30,60)$ \\
$\PSL(3,2) : 2$ & 28 & $-18$ &12& $(28,46)$ \\
$\PSL(2,8)$ & 28 & $-18$ &2& $(28,70)$ \\
$S_6$ & 15 & $-2$ &2& $(15,28)$ \\
$2^3 : \PSL(3,2)$ & 28 & $-16$ &2& $(28,159)$ \\
\hline \end{tabular}
\\[3mm]\caption{The vertex-transitive graphs which are not uniformly-vertex transitive, listed by automorphism group.}\label{tab:nuvt}\end{table}

We additionally list by automorphism group the vertex-transitive graphs which are known to not be uniformly-vertex transitive in \cref{tab:nuvt}. The first column lists a description of the permutation list and the second column lists the degree of the permutation group. The group descriptions are based on output from the GAP \texttt{StructureDescription()} function. Note that an abstract group may have several entries corresponding to its different actions. In the third column we record the quantity $\omega(D(G))-\deg(G)$, the difference between the maximum size of a clique in $D(G)$ and the degree of the permutation group $G$. If this value is 0, then $G$ is uniformly-transitive. The fourth column lists the number of graphs which have this automorphism group. Lastly, we record the identification number of $G$ in the GAP Transitive Groups Library \cite{TransGrp}. For example, to create the first group in the table in GAP, one would simply call \texttt{TransitiveGroup(28,11)}.

One noteworthy observation from \cref{tab:nuvt} is that their automorphism groups tend to include non-abelian simple groups as a large subgroup. In some sense, this may provide further evidence that the converse of \cref{thm:factor} is true. Indeed, the automorphism groups failing to have a factorizing block system because of a large simple subgroup is in line with \cref{prop:simple}.

We provide this information for all groups appearing as the automorphism group of a vertex-transitive non-Cayley graph (including the uniformly vertex-transitive graphs) in the appendix.

\section{Further directions}\label{sec:further}

One possible generalization of the notion of uniform-vertex transitivity is the following, in which we replace the matrix $J_n$ by one of its integer multiples.

\begin{defn}
Let $\Gamma$ be a graph on $n$ vertices and let $k\geq 1$. The graph $\Gamma$ is \emph{$k$-uniformly vertex-transitive} if there exists a size $kn$ subset $\{\sigma_1,\ldots,\sigma_{kn}\}\subset\Aut(\Gamma)$ such that, when viewed as permutation matrices, $\sum \sigma_i = kJ_n$. Such a subset $\{\sigma_1,\ldots,\sigma_{kn}\}$ is called a \emph{$k$-maximal Schur set}. 
\end{defn}

Setting $k=1$ recovers the above notion of uniform vertex-transitivity. It turns out that every vertex-transitive graph is $k$-uniformly vertex-transitive for some $k$, and this $k$ admits an explicit description in terms of the action of the group $\Aut(\Gamma)$. 

\begin{prop}\label{prop:uvtk}
Let $\Gamma$ be a vertex-transitive graph on $n$ vertices, and let $s$ be the order of the stabilizer of a vertex in $\Aut(\Gamma)$. Then,
\begin{enumerate}[(a)]
\item the graph $\Gamma$ is $s$-uniformly vertex-transitive.
\item for $1\leq k\leq s$, $\Gamma$ is $k$-uniformly vertex-transitive if and only if $\Gamma$ is $(s-k)$-uniformly vertex-transitive.
\end{enumerate}
\end{prop}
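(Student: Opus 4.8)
The plan is to avoid clique computations entirely and work directly with the full automorphism group $G = \Aut(\Gamma)$, regarded as a set of permutation matrices, via one counting identity. For part (a), I would first note that the orbit--stabilizer theorem gives $|G| = ns$. Then, for any ordered pair of vertices $u, v$, the set $\{\sigma \in G : \sigma(u) = v\}$ is a (nonempty, by vertex-transitivity) coset of the point stabilizer $G_u$, hence has exactly $s$ elements. Reading this off as matrix entries shows $\sum_{\sigma \in G} \sigma = s J_n$. Since $|G| = ns$, the set $G$ is itself an $s$-maximal Schur set, so $\Gamma$ is $s$-uniformly vertex-transitive.

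For part (b), the key observation is that a $k$-maximal Schur set inside $G$ and its complement in $G$ are interchangeable. Suppose $\Gamma$ is $k$-uniformly vertex-transitive, witnessed by a size-$kn$ subset $T \subseteq \Aut(\Gamma)$ with $\sum_{\sigma \in T} \sigma = k J_n$. Then $G \setminus T$ has $ns - kn = (s-k)n$ elements, and using the identity from part (a),
\[ \sum_{\sigma \in G \setminus T} \sigma \;=\; \sum_{\sigma \in G} \sigma \;-\; \sum_{\sigma \in T} \sigma \;=\; s J_n - k J_n \;=\; (s-k) J_n, \]
so $G \setminus T$ is an $(s-k)$-maximal Schur set and $\Gamma$ is $(s-k)$-uniformly vertex-transitive. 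Applying the same argument with $s - k$ in place of $k$ yields the converse, hence the equivalence.

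I do not expect a genuine obstacle: the only point requiring a little care is that the families in the definition of $k$-uniform vertex-transitivity are honest \emph{subsets} of $\Aut(\Gamma)$ (i.e. the listed automorphisms are distinct), so that the subtraction of matrix sums above really does correspond to set-theoretic complementation; this is immediate from the definition. As a consistency check, one can also observe that $k \le s$ is forced whenever $\Gamma$ is $k$-uniformly vertex-transitive, since every entry of a sum of distinct permutation matrices drawn from $G$ is at most $s$; this is why the range $1 \le k \le s$ appearing in (b) is the natural one.
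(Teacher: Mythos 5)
Your proof is correct and follows essentially the same route as the paper's: part (a) via the observation that the automorphisms sending $u$ to $v$ form a coset of the point stabilizer, so $\sum_{\sigma\in\Aut(\Gamma)}\sigma = sJ_n$, and part (b) by complementing a $k$-maximal Schur set inside $\Aut(\Gamma)$. Your added remarks (the explicit use of orbit--stabilizer and the note that $k\le s$ is forced) are sound but do not change the argument.
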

\begin{proof}
We claim that $\Aut(\Gamma)$ is an $s$-maximal Schur set for itself. Indeed, let $M=\sum_{\sigma\in\Aut(\Gamma)}\sigma$. Consider the $i$th row of the matrix $M$, which has an $s$ in the $i$th position, as $s$ is the number of elements in $\Aut(\Gamma)$ stabilizing vertex  $i$. The $j$th entry of the $i$th row corresponds to all automorphisms moving vertex $j$ to vertex $i$, which is a coset of the stabilizer of vertex $j$, which also has size $s$. Thus, each row of $M$ consists of $s$ in each position. This shows that $M=sJ_n$, which proves (a).

Suppose that $\Gamma$ is $k$-uniformly vertex-transitive for some $k\leq s$, and let $S$ be a $k$-maximal Schur set. Then, $\Aut(\Gamma)-S$ is a $(s-k)$-maximal Schur set. Indeed, we see that
\[ \sum_{\sigma\in \Aut(\Gamma) - S} \sigma = \sum_{\sigma \in \Aut(\Gamma)} \sigma - \sum_{\sigma \in S} \sigma = sJ_n - kJ_n = (s-k)J_n \] 
which proves (b).
\end{proof}

The following result improves \cref{prop:implications} for Cayley graphs.

\begin{prop}\label{prop:cay-uvtk}
Let $\Gamma$ be a Cayley graph on $n$ vertices, with vertex stabilizer of size $s$. Then $\Gamma$ is $k$-uniformly vertex-transitive for all $1\leq k\leq s$.
\end{prop}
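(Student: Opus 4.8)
The plan is to combine the regular subgroup supplied by \cref{prop:cay-reg} with the coset argument already used to prove \cref{lem:clique_mult}. Since $\Gamma$ has $n$ vertices and $\Aut(\Gamma)$ acts transitively with vertex stabilizer of size $s$, the orbit--stabilizer theorem gives $|\Aut(\Gamma)| = ns$; hence the regular subgroup $G \le \Aut(\Gamma)$, which has order $n$, has index $s$ in $\Aut(\Gamma)$.

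First I would record that $G$, viewed as a set of $n$ permutation matrices, is a maximal Schur set, exactly as in the proof of \cref{prop:implications}(a): since the action of $G$ is regular it satisfies condition (c) of \cref{lem:eq-conditions}, so $\sum_{g \in G} g = J_n$. Next, fix left coset representatives $\alpha_1, \dots, \alpha_s$ for $G$ in $\Aut(\Gamma)$. By \cref{lem:clique_mult}, each translate $\alpha_t G$ is again a maximal Schur set, so $\sum_{g \in G} \alpha_t g = J_n$ for every $t \in \{1, \dots, s\}$.

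Now fix $k$ with $1 \le k \le s$ and set $S = \alpha_1 G \cup \dots \cup \alpha_k G$. The chosen cosets are pairwise disjoint and each has cardinality $n$, so $|S| = kn$. Summing the associated permutation matrices,
\[ \sum_{\sigma \in S} \sigma \;=\; \sum_{t=1}^{k} \sum_{g \in G} \alpha_t g \;=\; \sum_{t=1}^{k} J_n \;=\; k J_n, \]
so $S$ is a $k$-maximal Schur set and therefore $\Gamma$ is $k$-uniformly vertex-transitive.

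I do not expect a genuine obstacle here: all the content lies in the two observations that a left coset of a regular subgroup is a maximal Schur set (immediate from \cref{lem:clique_mult}) and that there are exactly $s$ such cosets available (the index computation above). It is worth noting that taking $k = s$ exhausts all of $\Aut(\Gamma)$, so this argument also recovers \cref{prop:uvtk}(a) in the Cayley case.
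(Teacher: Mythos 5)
Your proof is correct and follows essentially the same route as the paper: it takes the regular subgroup $G$ from \cref{prop:cay-reg}, notes that the $s$ cosets of $G$ in $\Aut(\Gamma)$ are each maximal Schur sets by \cref{lem:clique_mult}, and takes the union of $k$ of them. The only difference is that you spell out the index computation and the coset representatives explicitly, which the paper leaves implicit.
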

\begin{proof}
Let $G$ be a subgroup of $\Aut(\Gamma)$ which acts regularly on $V(\Gamma)$, as guaranteed to exist by \cref{prop:cay-reg}. In particular, $|G|=n$, and $\Aut(\Gamma)$ is partitioned into $s$ many subsets of size $n$ by the cosets of $G$ in $\Aut(\Gamma)$. Each of these cosets is a maximal Schur set for $\Aut(\Gamma)$ by \cref{lem:clique_mult}. As such, taking the union of any $k$ of these cosets yields a $k$-maximal Schur set of $\Aut(\Gamma)$.
\end{proof}

One fundamental difference in $k$-uniform vertex-transitivity for $k=1$ versus $k>1$ is that for $k>1$, computation is more difficult. As seen, the $k=1$ case can be understood in terms of a pairwise property of automorphisms---that their entrywise product is 0. The $k>1$ does not seem to admit such a clean understanding, and therefore, more sophisticated methods are needed to determine whether or not an arbitrary graph is $k$-uniformly vertex-transitive for $k>1$.

In particular, it is unclear whether non-uniformly vertex-transitive graphs may be $k$-uniformly vertex-transitive for some $1<k<s$, with $s$ the order of a vertex stabilizer in the automorphism group. Even on the smallest example of a such a graph, the line graph of the Petersen graph, we were unsuccessful in determining whether it is 2-uniformly vertex-transitive. 

\section*{Acknowledgments} 
The authors would like to thank David Roberson for useful comments on an earlier version of this article and for bringing the notions of the derangement graph and sharply transitive sets to our attention. This work was supported by the DAAD RISE program and the collaborative research center SFB-TRR 195 \emph{Symbolic Tools in Mathematics and their Application}. Simon Schmidt and Moritz Weber were also supported by the DFG project \emph{Quantenautomorphismen von Graphen}.

\appendix\section{Automorphism groups of non-Cayley graphs}

Here we provide the information presented in \cref{tab:nuvt} for \emph{all} vertex-transitive non-Cayley graphs, including the uniformly vertex-transitive graphs. We leave the entry in the $\omega(D(G))-\deg(G)$ column blank if the value is unknown.
\begin{longtable}{lcccc} \hline\endfoot \hline Permutation Group $G$ & Degree & $\omega(D(G))-\deg(G)$ & Graphs & ID\# \\\hline\endhead

% Paste from table-full.csv
$2^3 : 4$  & 16 & $0$ &8& $(16,33)$ \\
$3^2 : 4$ & 18 & $0$ &2& $(18,10)$ \\
$2 \times (5 : 4)$ & 20 & $0$ &34& $(20,13)$ \\
$A_4 : 4$ & 24 & $0$ &20& $(24,51)$ \\
$4 \times A_4$ & 24 & $0$ &4& $(24,55)$ \\
${13} : 4$ & 26 & $0$ &130& $(26,4)$ \\
$2^3 : 7$ & 28 & $-21$ &18& $(28,11)$ \\
$A_5$ & 20 & $-10$ &4& $(20,15)$ \\
 & 30 & $-17$ &382& $(30,9)$ \\
${15} : 4$ & 30 & $0$ &44& $(30,6)$ \\
$3 \times (5 : 4)$ & 30 & $0$ &96& $(30,7)$ \\
$S_3^2 : 2$ & 24 & $0$ &64& $(24,72)$ \\
$(5 : 4) \times S_3$ & 30 & $0$ &88& $(30,24)$ \\
$2 \times A_5$ & 20 & $-8$ &8& $(20,36)$ \\
 & 30 & $-4$ &88& $(30,29)$ \\
 & 30 & $-4$ &32& $(30,30)$ \\
$S_5$ & 10 & $0$ &2& $(10,13)$ \\
 & 15 & $-2$ &2& $(15,10)$ \\
 & 30 & $-17$ &22& $(30,22)$ \\
 & 30 & $-4$ &32& $(30,25)$ \\
 & 30 & $-14$ &90& $(30,27)$ \\
$3^2 : QD_{16}$ & 18 & $0$ &2& $(18,73)$ \\
${13} : {12}$ & 26 & $0$ &2& $(26,8)$ \\
$2 \times (2^4 : 5)$ & 20 & $0$ &4& $(20,41)$ \\
$\PSL(3,2)$ & 28 & $-18$ &4& $(28,32)$ \\
$3^2 : (5 : 4)$ & 30 & $0$ &48& $(30,46)$ \\
$(((4 \times 2) : 4) : 3) : 2$ & 24 & $0$ &8& $(24,313)$ \\
$((2 \times (4 : 4)) : 2) : 3$ & 24 & $0$ &4& $(24,461)$ \\
$((2^2 \times D_8) : 2) : 3$ & 24 & $0$ &4& $(24,499)$ \\
$A_5 : 4$ & 24 & $0$ &4& $(24,578)$ \\
$2 \times S_5$ & 20 & $0$ &12& $(20,62)$ \\
 & 30 & $-4$ &40& $(30,58)$ \\
 & 30 & $-4$ &44& $(30,60)$ \\
$2 \times ((2^4 : 5) : 2)$ & 20 & $0$ &8& $(20,87)$ \\
$\PSL(3,2) : 2$ & 28 & $-18$ &12& $(28,46)$ \\
$((((4 \times 2) : 4) : 3) : 2) : 2$ & 24 & $0$ &4& $(24,844)$ \\
$5^2 : ((4 \times 2) : 2)$ & 20 & $0$ &2& $(20,96)$ \\
$\PSL(2,8)$ & 28 & $-18$ &2& $(28,70)$ \\
$2 \times ((2^4 : 5) : 4)$ & 20 & $0$ &4& $(20,140)$ \\
$(3^2 : (({10} \times 2) : 4)$ & 30 & $0$ &20& $(30,169)$ \\
$S_5 \times S_3$ & 30 & $0$ &16& $(30,174)$ \\
$S_6$ & 15 & $-2$ &2& $(15,28)$ \\
 & 30 &  &6& $(30,164)$ \\
$2 \times ((2^6) : 7)$ & 28 & $0$ &8& $(28,110)$ \\
$(2^3) : \PSL(3,2)$ & 28 & $-16$ &2& $(28,159)$ \\
$(A_6 : 2) : 2$ & 30 &  &8& $(30,264)$ \\
$2 \times S_6$ & 20 & $0$ &2& $(20,198)$ \\
 & 30 &  &12& $(30,260)$ \\
 & 30 &  &8& $(30,261)$ \\
$2 \times (((2^6) : 7) : 2)$ & 28 & $0$ &16& $(28,185)$ \\
$2 \times \PSL(2,13)$ & 28 &  &2& $(28,199)$ \\
$2 \times (2^4 : S_5)$ & 30 &  &8& $(30,517)$ \\
$5^3 : (A_4 : 4)$ & 30 & $0$ &4& $(30,604)$ \\
$5^3 : (4 \times S_4)$ & 30 &  &4& $(30,780)$ \\
$S_5 \wr 2$ & 20 & $0$ &4& $(20,540)$ \\
 & 30 &  &4& $(30,1011)$ \\
$S_8$ & 28 &  &2& $(28,502)$ \\
$3^5 : (2 \times ((2^4 : 5) : 4))$ & 30 & $0$ &8& $(30,1550)$ \\
$S_6 \wr 2$ & 30 &  &4& $(30,2029)$ \\
$2 \times (2^4 : (2^{10} : S_5))$ & 30 &  &4& $(30,2525)$ \\
$A_5^3 : (2 \times S_4)$ & 30 &  &4& $(30,2994)$ \\
$2 \times (2^{14} : S_6)$ & 30 &  &4& $(30,3397)$ \\
$3^{10} : (2^4 : ((2^5 : A_5) : 2^2))$ & 30 &  &4& $(30,5185)$ \\ \end{longtable}

\end{document}